\tikzset{join/.code=\tikzset{after node path={%
\ifx\tikzchainprevious\pgfutil@empty\else(\tikzchainprevious)%
edge[every join]#1(\tikzchaincurrent)\fi}}}
\tikzset{>=stealth',every on chain/.append style={join},
         every join/.style={->}}
\tikzstyle{labeled}=[execute at begin node=$\scriptstyle,
\DeclareFontFamily{U}{wncy}{}
\DeclareFontShape{U}{wncy}{m}{n}{<->wncyr10}{}
\DeclareSymbolFont{mcy}{U}{wncy}{m}{n}
\DeclareMathSymbol{\Sh}{\mathord}{mcy}{"58}
\newtheoremstyle{named}{}{}{\itshape}{}{\bfseries}{.}{.5em}{\thmnote{#3's }#1}
\theoremstyle{named}
\theoremstyle{plain}
\newtheorem{theorem}{Theorem}
\newtheorem{corollary}{Corollary}
\newtheorem{lemma}{Lemma}
\newtheorem{proposition}{Proposition}
\theoremstyle{remark}
\newtheorem{remark}{Remark}
\theoremstyle{definition}
\newtheorem{definition}{Definition}
\newcommand{\Q}{\mathbf{Q}}
\newcommand{\F}{\mathbf{F}}
\newcommand{\OO}{\mathcal{O}}
\newcommand{\Z}{\mathbf{Z}}
\newcommand{\GL}{\mathrm{GL}_2}
\newcommand{\PGL}{\mathrm{PGL}_2}
\newcommand{\PSL}{\mathrm{PSL}_2}
\newcommand{\rhof}{\bar{\rho}_{f,\lambda}}
\newcommand{\rhofproj}{\bar{\rho}_{f,\lambda}^{\mathrm{proj}}}
\title{modular forms of arbitrary even weight with no exceptional primes}
\author{Jeffrey Hatley}
\date{\today}
\begin{document}

\begin{abstract}
A result of Dieulefait-Wiese proves the existence of modular eigenforms of weight $2$ for which the image of every associated residual Galois representation is as large as possible. We generalize this result to eigenforms of general even weight $k \geq 2$.
\end{abstract}

\maketitle

\section{Introduction}

The purpose of this note is to provide a modest generalization of a theorem of Dieulefait-Wiese. Before stating the result, we briefly recall some terminology and notation.

Let $f = \sum a_n q^n \in S_k (\Gamma_0(N))$ be a normalized cuspidal modular eigenform (henceforth simply called an ``eigenform'') of weight $k \geq 2$ and level $\Gamma_0(N)$ for some integer $N \geq 1$. Let $G_\Q$ denote the absolute Galois group $\mathrm{Gal}(\bar{\Q}/\Q)$. The Fourier coefficients $\{ a_i \}$ generate a number field $K_f$. Let $\OO_f$ be the ring of integers of $K_f$, let $\lambda$ be a maximal ideal in $\OO_f$ with residue characteristic $\ell$, and write $\F_\lambda$ for the extension of $\F_\ell$ generated by $\{ a_i \mod \lambda \}$, the residues of the Hecke eigenvalues. By work of Deligne, there is a Galois representation
\[
\rho_{f,\lambda} : G_\Q \to \GL(\OO_{f,\lambda})
\]
as well as an associated semisimple residual representation 
\[
\bar{\rho}_{f,\lambda} : G_\Q \to \mathrm{GL}_2 (\F_\lambda).
\]
These representations are unramified outside the primes dividing $N \ell \infty$, and $\rhof$ is absolutely irreducible for almost all primes $\lambda$. Upon composing $\bar{\rho}_{f,\lambda}$ with the natural projection $\GL(\F_\lambda) \to \PGL(\F_\lambda)$, we obtain the projective representation 
\[
\rhofproj : G_\Q \to \PGL(\F_\lambda).
\]

By a result of Ribet \cite[Theorem 3.1]{Rib85}, if $f$ does not have complex multiplication (CM), then the image of $\rhofproj$ is ``as large as possible'' for all but finitely many primes $\lambda$. More precisely, for almost all $\lambda$, the image of $\rhofproj$ is either $\PGL(\F_\lambda)$ or $\PSL(\F_\lambda)$ (see also \cite[Corollary 3.2]{DW}). In Section \ref{history} we briefly discuss the history of such results.

\begin{definition}
A maximal ideal $\lambda$ of $\OO_f$ is called \textit{exceptional} if the image of $\rhofproj$ is not $\PGL(\F_\lambda)$ or $\PSL(\F_\lambda)$. We may also say that $\rhofproj$ is exceptional.
\end{definition}

\begin{remark}
Recall that by Dickson's classification, if $\rhof$ is both irreducible and exceptional, then the image must be either dihedral or isomorphic to $A_4$, $S_4$, or $A_5$.
\end{remark}

Thus Ribet's theorem states that if $f$ does not have CM, then it has only finitely many exceptional primes. The following theorem was proved by Dieulefait-Wiese.

\begin{theorem}\label{DW-main-thm}\cite[Theorem 6.2]{DW}
There exist eigenforms $(f_n)_{n \in \mathbf{N}}$ of weight $2$ such that
\begin{enumerate}
\item for all $n$ the eigenform $f_n$ has no exceptional primes, and
\item for a fixed prime $\ell$, the size of the image of $\bar{\rho}_{f_n,\lambda_n}$ for $\lambda_n \vartriangleleft \OO_{f_n}$ is unbounded for running $n$.
\end{enumerate}
\end{theorem}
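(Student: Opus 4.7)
The plan is to construct the sequence $(f_n)$ as weight-$2$ newforms on levels carefully engineered to satisfy both conditions simultaneously. For (1), I would impose local ramification conditions at auxiliary primes that force the residual image to contain specific elements incompatible with every possible exceptional shape; for (2), I would vary these levels to make the coefficient field $K_{f_n}$, and hence the residue field $\F_{\lambda_n}$, grow without bound.

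First, I would fix a finite set of auxiliary primes $q_1, \ldots, q_r$ and consider newforms $f_n \in S_2(\Gamma_0(N_n))$ whose levels are divisible by each $q_i$ to exact order $1$, so that $f_n$ is Steinberg at each $q_i$. For any prime $\lambda$ of $\OO_{f_n}$ with residue characteristic $\ell' \neq q_i$ satisfying $\ell' \nmid (q_i - 1)$, the Steinberg condition forces the image of inertia at $q_i$ under $\bar{\rho}_{f_n,\lambda}$ to be generated by a nontrivial unipotent element. Such an element has projective order $\ell'$, which rules out $A_4$, $S_4$, and $A_5$ as soon as $\ell' \geq 7$, since none of those groups contains an element of order $\ell'$; it also rules out dihedral images, since a nontrivial unipotent does not lie in the normalizer of any Cartan subgroup. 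Reducibility can be handled by combining the Steinberg local shape with standard global irreducibility criteria. Thus a single well-chosen Steinberg prime should suffice to handle all but finitely many residue characteristics of $\lambda$.

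The main obstacle will be handling the remaining small residue characteristics $\ell' \in \{2, 3, 5\}$, together with those finitely many $\ell'$ dividing some $q_i - 1$, since there a unipotent element alone does not kill all exceptional subgroups. For each such $\ell'$ I would introduce additional auxiliary primes at which a different local type---principal series with a prescribed character, or supercuspidal controlled via the level-raising machinery---ensures that the image of $\bar{\rho}_{f_n,\lambda}$ contains a semisimple element whose projective order is incompatible with every exceptional shape at that specific $\ell'$. This case-by-case analysis for small primes, and the verification that the auxiliary primes chosen to kill one exceptional possibility do not reintroduce another, is the main technical hurdle.

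Finally, for (2), I would iterate Ribet's level-raising theorem to produce at each stage a newform $f_{n+1}$ on a strictly larger level whose coefficient field $K_{f_{n+1}}$ has greater degree over $\Q$, ensuring that one can choose $\lambda_{n+1} \mid \ell$ so that $\F_{\lambda_{n+1}}$ is a proper extension of any previously occurring residue field. The delicate interplay is to verify that level raising can be carried out while preserving all the Steinberg and auxiliary local conditions imposed above, so that condition (1) continues to hold for every $f_n$ while the image in condition (2) grows without bound.
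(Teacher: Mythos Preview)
Your outline diverges from the paper's (equivalently, Dieulefait--Wiese's) method in its choice of local condition. The paper does not use Steinberg primes; it uses \emph{tamely dihedral} (supercuspidal) primes. One applies the level-raising result of Proposition~\ref{levelraise} twice, starting from any squarefree-level newform, to produce a newform $h\in S_2(\Gamma_0(Nq^2u^2))$ that is tamely dihedral at $q$ of prime order $p>B$ and tamely dihedral at $u$ of a different prime order $t>B$. For every $\lambda$ not above $\{p,q\}$ the local representation at $q$ is already irreducible and its image contains an element of order $p>5$, which in one stroke rules out $A_4,S_4,A_5$; the symmetric role of $u$ covers the primes $\lambda$ above $p$ or $q$. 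The dihedral case is excluded by a splitting argument using both $q$ and $u$ simultaneously (Proposition~\ref{mainprop}). Part~(2) then follows because the orders $p,t$ may be chosen larger than any bound $B_n\to\infty$, so the image contains elements of unbounded order; no control of the coefficient field is needed or attempted.

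The gap in your plan is exactly where you place it. For $\ell'\in\{2,3,5\}$ a unipotent element has order $\ell'\le 5$ and so excludes nothing; you propose to fix this with additional primes carrying ``principal series with a prescribed character, or supercuspidal'' data, but you give no construction. In fact the supercuspidal option \emph{is} the Dieulefait--Wiese method, and once it is in place the Steinberg primes become superfluous---so your ``patch'' would swallow the entire argument. There are secondary problems as well: the condition you give for the unipotent at $q_i$ to survive reduction mod $\lambda$, namely $\ell'\nmid(q_i-1)$, is not the right one (for an elliptic curve the condition is $\ell'\nmid v_{q_i}(\Delta)$, and in general it depends on the particular $f_n$ rather than only on $q_i$), and the assertion that iterated level raising forces $[K_{f_{n+1}}:\Q]$ to strictly increase is not a formal consequence of Ribet's theorem and would itself require justification.
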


\begin{remark} The eigenforms $f_n$ in Theorem $\ref{DW-main-thm}$ have some additional technical properties. First, they do not have CM, which is a necessary condition. Second, they have no nontrivial inner twists; this is important for their application to the Inverse Galois problem in \cite{DW}. While the modular forms which we construct in Theorem \ref{main-thm-intro} also enjoy these properties, we will not mention them for the sake of brevity and ease of exposition.
\end{remark}

In this paper, we modify the arguments of \cite{DW} to obtain a version of Theorem \ref{DW-main-thm} for eigenforms of general even weight $k \geq 2$. The main result of this paper is the following.

\begin{theorem}\label{main-thm-intro}
Let $k \geq 2$ be an even integer. There exist eigenforms $(f_n)_{n \in \mathbf{N}}$ of weight $k$ such that
\begin{enumerate}
\item for all $n$ the eigenform $f_n$ has no exceptional primes, and
\item for a fixed prime $\ell$, the size of the image of $\bar{\rho}_{f_n,\lambda_n}$ for $\lambda_n \vartriangleleft \OO_{f_n}$ is unbounded for running $n$.
\end{enumerate}
\end{theorem}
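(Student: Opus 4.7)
The plan is to follow the strategy of Dieulefait--Wiese, replacing each input involving weight $2$ by its weight-$k$ analogue. For each $n$ one constructs an eigenform $f_n \in S_k(\Gamma_0(N_n))$ whose level is divisible by a finite set of carefully chosen auxiliary primes, and then checks that the local behavior of $\bar\rho_{f_n,\lambda}$ at these primes (i) rules out every exceptional image in Dickson's classification uniformly in $\lambda$, and (ii) forces elements of increasing order into the image at a fixed residue characteristic $\ell$ as $n$ grows.

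The local recipe I would try is the one used in \cite{DW}. Fix an auxiliary Steinberg prime $p$ in the level of each $f_n$; then for every $\lambda$ of residue characteristic $\ell$ different from $p$ and odd, the image of $\bar\rho_{f_n,\lambda}$ contains a nontrivial unipotent element of order $\ell$, which lies in no projective dihedral subgroup of $\PGL(\barFl)$ and in none of $A_4$, $S_4$, $A_5$ when $\ell > 5$. The small residue characteristics $\ell \in \{2,3,5\}$ can be disposed of by imposing a supercuspidal condition at a second auxiliary prime $q$ whose tame character has order divisible by a prime strictly larger than $5$, forcing an element of that order into the projective image and hence excluding all three remaining exceptional cases. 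Combined with Ribet's theorem \cite[Theorem 3.1]{Rib85}, which handles cofinitely many $\lambda$ in the CM-free setting, these local conditions eliminate every exceptional prime. To make the image grow with $n$, I would add a further supercuspidal prime $q_n$ in the level whose tame inertia character contributes a semisimple element of prescribed large order to the residual image at $\lambda \mid \ell$; varying $n$ drives this order to infinity.

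The real work sits in the existence step. In weight $2$, \cite{DW} builds $f_n$ via classical level-raising together with features specific to that weight; in weight $k$, I would instead construct a mod-$\ell$ Galois representation $\bar\rho : G_\Q \to \GL(\barFl)$ with the prescribed local types and realize it as the residual representation of a weight-$k$ cuspidal eigenform by invoking Khare--Wintenberger style modularity together with its refinements by prescribed weight and type. One then verifies from the construction that $f_n$ has neither CM nor nontrivial inner twists. I expect the main obstacle to be precisely this existence step: the weight-$k$ modularity lift must be produced compatibly with all of the prescribed local conditions and uniformly in $n$, and the small residue characteristics $\ell \in \{2,3,5\}$, where Dickson's classification is most delicate, will require individual attention.
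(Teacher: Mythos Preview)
Your outline departs from the paper's proof in both the local analysis and the existence step, and as written it has genuine gaps.

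On the local side, the paper does not use a Steinberg prime at all. It equips $f_n$ with \emph{two} tamely dihedral (supercuspidal) primes $q,u$, of prime-power orders $p^r,t^s>5$, together with splitting conditions placing $q$ and $u$ in a prescribed multiquadratic field. Irreducibility for every $\lambda$ then comes for free: for $\ell\notin\{p,q\}$ the local representation at $q$ is already irreducible, and otherwise one uses $u$. Your Steinberg unipotent does exclude the dihedral and $A_4,S_4,A_5$ cases for odd $\ell\neq p$, but it says nothing about reducibility (the image could sit in a Borel), and it gives no information at $\ell=p$; a single supercuspidal prime does not close these gaps, since an element of large order alone does not preclude a dihedral projective image. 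The paper's dihedral elimination is different and is where the only genuinely new weight-$k$ input appears: if $\rhofproj$ were induced from a quadratic field $E$, the splitting hypotheses on $q,u$ force $\ell\nmid 2Nu$, hence $\ell>5k-4$; then Corollary~\ref{cyclicinertia} and Lemma~\ref{localinertia} (using the explicit shape of $\rhof|_{I_\ell}$ from Theorem~\ref{localform}) show $E$ is unramified at $\ell$, after which the splitting conditions give $E=\Q$, a contradiction. Your sketch never invokes the structure of $\rhof|_{I_\ell}$, and this is precisely the step that changes when one passes from weight $2$ to weight $k$. Finally, Ribet's theorem plays no role here: it only bounds the exceptional set, it does not identify it, so it cannot be combined with local conditions to ``finish off'' the remaining primes.

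On the existence side, invoking Khare--Wintenberger is unnecessary and much harder than what is actually needed. The level-raising result of \cite[Proposition~5.4]{DW}, restated here as Proposition~\ref{levelraise}, already applies to newforms of arbitrary weight $k$ and trivial nebentype: starting from any newform of squarefree level in $S_k(\Gamma_0(N))$, two successive applications produce an eigenform of level $Nq^2u^2$ that is tamely dihedral at $q$ and $u$ of arbitrarily large prime orders. Iterating with growing bounds $B_n$ on those orders gives the unbounded-image statement directly, with no modularity lifting required.
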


\begin{remark}\label{rmk-ell}
If $f$ is a weight $2$ eigenform with trivial nebentype whose coefficients are all rational, then by the Eichler-Shimura construction, there is an elliptic curve $E / \Q$ associated to $f$. In \cite{Daniels-SC}, Daniels constructed an explicit infinite family of elliptic curves over $\Q$ whose adelic Galois representations have maximal image; in particular, they have no exceptional primes. In fact, Duke and Jones have shown that, in an appropriate sense, almost all elliptic curves have no exceptional primes \cite{Duke,Jones}. 

Thus, the value of Theorem \ref{main-thm-intro} is in providing modular forms which are guaranteed not to correspond to elliptic curves but which nevertheless have no exceptional primes. 
\end{remark}

\subsection{Historical Context}\label{history}

Given a modular form $f$, one can form an adelic Galois representation
\[
\rho_f : G_\Q \to \prod_\lambda \GL(\OO_{f,\lambda})
\]
where $\lambda$ ranges over all maximal ideals of $\OO_f$. In the special case where $f$ corresponds to an elliptic curve $E / \Q$, this is equivalent to the ``full-torsion'' representation
\[
\rho_E : G_\Q \to \varprojlim_{n} \GL(\Z / n \Z) \simeq \GL(\hat{\Z}).
\]
Serre showed that, assuming $E$ does not have CM, the image of $\rho_E$ is {\em open} in a subgroup of index 2 inside $\GL(\hat{\Z})$ \cite[Proposition 22]{Serre72}; this implies that $E$ has finitely many exceptional primes. As mentioned in Remark \ref{rmk-ell}, more recent results have shown that, generically, an elliptic curve has no exceptional primes \cite{Duke,Jones}.

An analogue of Serre's theorem has recently been proven for modular forms. Loeffler showed that the adelic Galois representation attached to an arbitrary non-CM modular form of weight $k \geq 2$ has open image \cite[Theorem 2.3.1]{Loeffler}. This relies on older results of Ribet and Momose which proved that modular forms have finitely many exceptional primes; see for instance \cite[Theorem 3.1]{Rib85}.

Nevertheless, it can be very hard to explicitly identify the exceptional primes for any given modular form. Recent work of Billerey-Dieulefait gives explicit but complicated bounds on the exceptional primes for a modular form of weight $k \geq 2$ and trivial nebentype \cite{BD}.

\section{Preliminaries}

In this section we collect some definitions and basic results which will be needed in Section \ref{section-main} to prove our main result.

\subsection{Tamely dihedral representations}

The notion of \textit{tamely dihedral representations} was first defined by Dieulefait-Wiese in \cite[Section 4]{DW}; their definition was inspired by the notion of \textit{good-dihedral primes} from \cite{KW1}. We first recall some facts regarding Galois representations arising from modular forms. 

Let $f$ be an eigenform, let $K_f$ be its coefficient field and $\OO_f$ its ring of integers, and let $\lambda \mid \ell$ be a prime of $\OO_f$ dividing a rational prime $\ell$. For any rational prime $p$, let $G_p$ denote a decomposition group corresponding to $p$. For the rest of this section, let $p$ denote a prime different from $\ell$. By Grothendieck's monodromy theorem we may associate to the characteristic zero local representation
\[
\rho_{f,\lambda}\mid_{G_p} : G_p \to \GL(\OO_{f,\lambda})
\]
a $2$-dimensional Weil-Deligne representation $\tau_p =(\tilde{\rho}, \tilde{N})$. Here
\[
\tilde{\rho} : W_{\Q_p} \to \GL(K_{f,\lambda})
\]
is a continuous representation of the Weil group of $\Q_p$ for the discrete topology on $\GL(K_{f,\lambda})$, $\tilde{N}$ is a nilpotent matrix in M$_2(K_{f,\lambda})$, and we have the relation
\[
\tilde{\rho} \tilde{N} \tilde{\rho}^{-1} = \mid \cdot \mid^{-1} \tilde{N}
\]
where $\mid \cdot \mid$ is a particular norm map. The standard reference for these things is \cite{Tate}, but another very readable reference is \cite{Gee}.

\begin{definition}\cite[Definition 4.1]{DW}
Let $\Q_{p^2}$ be the unique unramified degree $2$ extension of $\Q_p$. Denote by $W_p$ and $W_{p^2}$ the Weil groups of $\Q_p$ and $\Q_{p^2}$, respectively.

A $2$-dimensional Weil-Deligne representation $\tau_p =(\tilde{\rho}, \tilde{N})$ of $\Q_p$ with values in $K_f$ is called {\em tamely dihedral of order $n$} if $\tilde{N}=0$ and there is a tame character $\psi : W_{p^2} \to K_{f,\lambda}^\times$ whose restriction to the inertia group $I_p$ (which is naturally a subgroup of $W_{p^2}$) is of niveau $2$ (i.e. it factors over $\mathbf{F}_{p^2}^\times$ and not over $\F_p^\times$) and of order $n > 2$, such that $\tilde{\rho} \simeq \mathrm{Ind}_{W_{p^2}}^{W_p} \psi$.

We say that an eigenform $f$ is {\em tamely dihedral of order $n$} at the prime $p$ if the Weil-Deligne representation $\tau_p$ at $p$ associated to $f$ is tamely dihedral of order $n$. 
\end{definition}

\begin{remark}
In terms of the local Langlands correspondence, $f$ can only be tamely dihedral at $p$ if it is {\em supercuspidal} at $p$. Recent work of Loeffler-Weinstein \cite{LW} has made it possible to test modular forms for the property of being tamely dihedral using the LocalComponent package of \cite{sage}. Thus, in theory one can find explicit examples of the modular forms whose existence is guaranteed by Theorem \ref{main-thm-intro}; however, as the proof of the theorem will indicate, these modular forms are expected to have very large level, and their construction seems beyond the scope of current computing capabilities. 
\end{remark}

\begin{proposition}\label{levelraise}
Let $f \in S_k(N,\chi_{triv})$ be a newform of odd level $N$ and trivial nebentype such that for all $\ell \mid N$
\begin{enumerate}
\item $\ell \mid \mid N$ or
\item $\ell^2 \mid \mid N$ and $f$ is tamely dihedral at $\ell$ of order $n_\ell > 2$ or
\item $\ell^2 \mid N$ and $\rho_{f,t}(G_\ell)$ can be conjugated to lie in the upper triangular matrices such that the elements on the diagonal all have odd order for some prime $t \nmid \ell$.
\end{enumerate}
Let $\{ p_1 , \ldots, p_r \}$ be any finite set of primes.

Then for almost all primes $p \equiv 1 \mod 4$ there is a set $S$ of primes of positive density which are completely split in $\Q(i, \sqrt{p_1}, \ldots, \sqrt{p_r})$ such that for all $q \in S$ there is a newform $g \in S_k (Nq^2, \chi_{triv})$ which is tamely dihedral at $q$ of order $p$ and for all $\ell \mid N$ we have
\begin{enumerate}
\item $\ell^2 \mid \mid N$ and $g$ is tamely dihedral at $\ell$ of order $n_\ell > 2$ or
\item $\rho_{g,t}(G_\ell)$ can be conjugated to lie in the upper triangular matrices such that the elements on the diagonal all have odd order for some prime  $t \nmid \ell$.
\end{enumerate}
\end{proposition}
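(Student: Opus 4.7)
The plan is to mimic the proof of Proposition 5.2 of \cite{DW}, with the one substantive modification that the weight-$2$ modularity lifting input must be upgraded to one valid in arbitrary even weight $k \geq 2$; most of the rest of the argument imports essentially verbatim.

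First, I would fix an auxiliary prime $t$ coprime to $N$ such that $\bar{\rho}_{f,t}$ has large image, which by Ribet's theorem is possible for all but finitely many $t$. This prime $t$ will serve as the residue characteristic for the modularity lifting step. For all sufficiently large primes $p \equiv 1 \pmod{4}$, I would then produce the set $S$ by Chebotarev: impose on the candidate prime $q$ the simultaneous conditions that $q$ splits completely in $\Q(i,\sqrt{p_1},\ldots,\sqrt{p_r})$, that $q \equiv -1 \pmod{p}$ (so that a niveau-$2$ tame character of $W_{q^2}$ of order $p$ exists, as required by the definition of tamely dihedral type of order $p$), and that $\bar{\rho}_{f,t}(\mathrm{Frob}_q)$ has trace $0$ modulo $t$ (the classical compatibility for raising the level at $q$ from $N$ to $Nq^2$ with supercuspidal, not Steinberg, local type). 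Since these correspond to independent splitting conditions in a compositum of linearly disjoint extensions, Chebotarev yields an $S$ of positive density.

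Second, for each $q \in S$, I would construct a tamely dihedral lift of $\bar{\rho}_{f,t}$ at $q$: a global deformation of $\bar{\rho}_{f,t}$ whose restriction to $G_q$ is $\mathrm{Ind}_{W_{q^2}}^{W_q}\psi$ for a niveau-$2$ tame character $\psi$ of order $p$, and which agrees with $\rho_{f,t}$ at every prime $\ell \neq q$. The trace-zero residual Frobenius at $q$ is exactly the compatibility needed: an order-$p$ tame character has trivial reduction modulo $t$, so the local reduction of the tamely dihedral lift matches the unramified local residual representation at $q$ precisely when the residual Frobenius has trace $0$. Third, I would invoke a modularity lifting theorem valid in weight $k \geq 2$ (for instance, a suitable Kisin- or Gee-style result) to conclude that this deformation is modular, yielding a newform $g \in S_k(Nq^2,\chi_{triv})$ which is tamely dihedral of order $p$ at $q$. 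Because the lift agrees with $\rho_{f,t}$ at every $\ell \neq q$, the local conditions (1)-(3) of the hypothesis transfer automatically to $g$: the Steinberg type in case (1) produces upper-triangular form with odd-order diagonal characters (conclusion (2)) since the nebentype is trivial; the tamely dihedral structure in hypothesis (2) is preserved (conclusion (1)); and the upper-triangular structure in hypothesis (3) is preserved (conclusion (2)).

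The main obstacle is the modularity lifting step. Dieulefait-Wiese can appeal to comparatively classical Taylor-Wiles-Kisin machinery in weight $2$; for general even weight $k \geq 2$, one must cite a lifting theorem whose hypotheses cover representations of higher Hodge-Tate weight—potentially non-ordinary at $t$—and verify the requisite bigness of the residual image together with the prescribed local deformation conditions at every prime dividing $Nq^2$. Verifying these hypotheses is precisely why finitely many small $p$ (and possibly some bad $t$) must be discarded, which is the source of the ``almost all $p$'' in the statement.
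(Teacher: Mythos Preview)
Your proposal is based on a misconception: the paper's own proof of this proposition is simply the one-line citation ``This is \cite[Proposition~5.4]{DW}.'' In other words, Dieulefait--Wiese already stated and proved this level-raising result for newforms of \emph{arbitrary} weight $k \geq 2$, not just weight~$2$; the present paper imports it verbatim, with no modification required. Your belief that ``the weight-$2$ modularity lifting input must be upgraded'' is therefore misplaced---that work was already done in \cite{DW}, and the paper under review does not revisit it.

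That said, your sketch is roughly in the spirit of how \cite{DW} actually proceeds (Chebotarev to select~$q$, construct a global lift with prescribed tamely dihedral local type at~$q$ and unchanged local types at $\ell \mid N$, then apply a modularity lifting theorem). A couple of points of caution if you were to carry it out: the relevant proposition in \cite{DW} is~5.4, not~5.2; and the local compatibility condition at~$q$ is somewhat more delicate than the bare ``trace zero'' level-raising criterion you invoke---one must match the full residual local representation at~$q$ to the reduction of the induced supercuspidal type, which in \cite{DW} is handled via an explicit analysis. But none of this is the paper's burden: the correct response here is simply to recognize that the proposition is quoted wholesale from the literature.
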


\begin{proof}
This is \cite[Proposition 5.4]{DW}.
\end{proof}

\subsection{Local $\ell$-adic representations}

Let $f=\sum a_n q^n$ be an eigenform, and let $\lambda$ be a prime of $\OO_f$ lying above the rational prime $\ell$. Recall that $f$ is said to be \textit{ordinary at} $\lambda$ if $a_\ell \not\equiv 0$ (mod $\lambda$); otherwise $f$ is said to be \textit{nonordinary at} $\lambda$. Let $G_\ell$ be a decomposition group at $\ell$ and $I_\ell$ its inertia group. 

The following theorem is due to Deligne, Fontaine, and Edixhoven.

\begin{theorem}\label{localform}
Assume $f$ is weight $k$ and $\rhof$ is irreducible.
\begin{enumerate}
\item If $k \geq 2$ and $f$ is ordinary at $\lambda$ then
\[
\rhof\mid_{I_\ell} \ \simeq \left( \begin{matrix} \chi_\ell^{k-1} & \ast \\ 0 & 1   \end{matrix} \right)
\]
where $\chi_\ell$ is the (reduction of the) $\ell$-adic cyclotomic character.
\item If $2 \leq k \leq \ell + 1$ and $f$ is nonordinary at $\lambda$ then
\[
\rhof\mid_{I_\ell} \ \simeq \left( \begin{matrix} \phi^{k-1} & 0 \\ 0 & \phi^{\ell(k-1)}   \end{matrix} \right)
\] 
where $\phi$ is a fundamental character of niveau $2$.
\end{enumerate}
\end{theorem}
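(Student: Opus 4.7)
The plan is to treat the two cases separately, in each case first pinning down the structure of $\rho_{f,\lambda}\mid_{G_\ell}$ and then reducing modulo $\lambda$.

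For part (1), the key input is the theorem (due to Mazur--Wiles in this generality) that when $f$ is ordinary at $\lambda$, the local representation $\rho_{f,\lambda}\mid_{G_\ell}$ is reducible, with an unramified quotient character $\chi_{\mathrm{unr}}$ sending geometric Frobenius to the unique $\lambda$-adic unit root of the Hecke polynomial $x^2 - a_\ell x + \ell^{k-1}$. Since $\det \rho_{f,\lambda} = \chi_\ell^{k-1}$ in this setting of trivial nebentype, the sub-character must be $\chi_\ell^{k-1} \chi_{\mathrm{unr}}^{-1}$; restricting to $I_\ell$ kills $\chi_{\mathrm{unr}}$, and reducing modulo $\lambda$ yields the displayed upper-triangular form.

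For part (2), the input is Fontaine's theorem that $\rho_{f,\lambda}\mid_{G_\ell}$ is crystalline with Hodge--Tate weights $\{0, k-1\}$. In the range $2 \leq k \leq \ell$, the mod-$\ell$ reduction of such a crystalline representation is controlled by Fontaine--Laffaille theory, which classifies the possibilities in terms of filtered Frobenius modules; in the nonordinary case the output is not a sum of an unramified character with a cyclotomic-twist character but rather a direct sum of two tame characters of niveau $2$, and the Hodge--Tate weights $0$ and $k-1$ pin them down as $\phi^{k-1}$ and its Frobenius conjugate $\phi^{\ell(k-1)}$. A useful consistency check comes from the determinant: on $I_\ell$ one has $\chi_\ell|_{I_\ell} = \phi^{\ell+1}$, so the product of the two diagonal entries is $\phi^{(\ell+1)(k-1)} = \chi_\ell^{k-1}|_{I_\ell}$ as required. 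The boundary case $k = \ell + 1$ lies just outside the classical Fontaine--Laffaille range and was handled by Edixhoven via an explicit analysis at the top weight.

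The main obstacle is locating the right $p$-adic Hodge theory inputs and verifying the character exponents: the underlying structural results (local reducibility in the ordinary case, the Fontaine--Laffaille classification in the nonordinary case, and Edixhoven's extension at the top weight) are deep theorems whose proofs are far beyond the scope of this note. Since the theorem is invoked here only as a black box, in practice the task reduces to citing these sources and checking the determinant-consistency of the exponents appearing in each case.
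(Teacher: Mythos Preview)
Your proposal is correct and takes essentially the same approach as the paper: both treat the theorem as a black box, citing the deep results of Deligne, Fontaine, and Edixhoven rather than reproving them. The paper's proof consists solely of a reference to \cite[Theorem 1.2]{BG}, whereas you supply more detail on the underlying $p$-adic Hodge theory and a determinant consistency check; one minor attribution quibble is that the ordinary case (1) is usually credited to Deligne (unpublished) and Wiles rather than to Mazur--Wiles.
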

\begin{proof}
We refer to reader to \cite[Theorem 1.2]{BG} and the remark which follows it.
\end{proof}

Thus, the image of inertia under $\rhof$ can be identified with the image of $\chi_\ell^{k-1}$ or $\phi^{(l-1)(k-1)}$ depending on whether $f$ is ordinary or nonordinary at $\lambda$. In particular, we have the following corollary.

\begin{corollary}\label{cyclicinertia}
Assume $\ell > k$. Let $\mathcal{I} = \rhofproj(I_\ell)$.
\begin{enumerate}
\item If $f$ is ordinary at $\lambda$, then $\mathcal{I}$ is cyclic of order $n = (\ell-1)/\mathrm{gcd}(\ell-1,k-1) \geq 2$. If $\ell > 5k-4$, then $n > 5$.

\item If $f$ is nonordinary at $\lambda$, then $\mathcal{I}$ is cyclic of order $n = (\ell + 1)/\mathrm{gcd}(\ell + 1,k-1) \geq 2$. If $\ell > 5k-4$, then $n > 5$.
\end{enumerate}
\end{corollary}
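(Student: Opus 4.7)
The plan is to prove the corollary by direct computation, leveraging the identification stated immediately before the corollary: the projective image of $I_\ell$ under $\rhofproj$ is realized by the image of a single tame character of $I_\ell$. Given this, the task reduces to computing the order of that character from the data of Theorem \ref{localform}.

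In the ordinary case (1), Theorem \ref{localform}(1) gives $\rhof|_{I_\ell}$ in upper-triangular form with diagonal characters $\chi_\ell^{k-1}$ and $1$. Projectively, the image $\mathcal{I}$ is cyclic, generated by the image of $\chi_\ell^{k-1}|_{I_\ell}$. Since the tame reduction of the cyclotomic character $\chi_\ell|_{I_\ell}$ surjects onto $\F_\ell^\times$ (of order $\ell-1$), one obtains $|\mathcal{I}| = (\ell-1)/\gcd(\ell-1,k-1) =: n$. In the nonordinary case (2), Theorem \ref{localform}(2) yields a diagonal form with entries $\phi^{k-1}$ and $\phi^{\ell(k-1)}$, so projectively $\mathcal{I}$ is cyclic, generated by the quotient character $\phi^{(1-\ell)(k-1)}$. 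Since $\phi|_{I_\ell}$ surjects onto $\F_{\ell^2}^\times$ (of order $\ell^2-1$), the character $\phi^{\ell-1}$ has order $(\ell^2-1)/(\ell-1) = \ell+1$, and consequently $\phi^{(\ell-1)(k-1)}$ has order $(\ell+1)/\gcd(\ell+1,k-1) =: n$.

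The numerical bounds on $n$ then follow from elementary inequalities. For $n \geq 2$: the hypothesis $\ell > k \geq 2$ forces $1 \leq k-1 < \ell-1 \leq \ell+1$, so $\gcd(\ell \pm 1, k-1) \leq k-1 < \ell \pm 1$, and hence $n \geq 2$ in either case. For $n > 5$ under $\ell > 5k-4$: this rearranges to $\ell-1 > 5(k-1)$, and a fortiori $\ell+1 > 5(k-1)$, so $n \geq (\ell \pm 1)/(k-1) > 5$.

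The one delicate point is the claim, in the ordinary case, that $\mathcal{I}$ is actually cyclic: a priori, the upper-right entry ``$*$'' could contribute wild elements of order $\ell$ to $\mathcal{I}$ through the image of $P_\ell$. The identification asserted just before the corollary ensures this does not happen; this rests on tameness properties of the local $\ell$-adic representation of a modular form under the hypothesis $\ell > k$ (via Fontaine--Laffaille theory or the theory of crystalline representations of small Hodge--Tate weight). Once this identification is granted, the remainder of the argument is the routine order computation above.
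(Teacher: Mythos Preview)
Your proof is correct and follows the same route as the paper, which simply asserts that the corollary ``follows immediately from Theorem~\ref{localform}'' and points to \cite[Lemma 1.2]{BD}; you have merely spelled out the order computations and the elementary inequalities. You also rightly flag the one genuine subtlety---that in the ordinary case the $\ast$ could a priori contribute wild $\ell$-power elements and obstruct cyclicity---which the paper absorbs into the identification stated just before the corollary (valid for $\ell>k$ via Fontaine--Laffaille/crystalline input).
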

\begin{proof}
This follows immediately from Theorem \ref{localform}; see also \cite[Lemma 1.2]{BD}
\end{proof}

We conclude this section with a lemma which is the crucial ingredient for generalizing from weight $2$ forms to weight $k$ forms. The first argument of this sort, for the $k=2$ case, goes back to Ribet (see the proof of \cite[Proposition 2.2]{Rib97}). For higher weights, see \cite[Section 3.3]{BD}, which we follow closely.

Let $\mathcal{G}=\rhofproj(G_\Q)$ be the projective image of $\rhof$, and suppose $\mathcal{G}$ is dihedral. Then $\mathcal{G}$ fits into an exact sequence of the form
\[
0 \to \mathcal{Z} \to \mathcal{G} \to \{ \pm 1\} \to 0 
\]
where $\mathcal{Z}$ is cyclic. This corresponds to a tower of fields 
\[
\Q \subset E \subset L
\]
with Galois groups 
\[
\mathrm{Gal}(L / \Q) \simeq \mathcal{G}, \ \mathrm{Gal}(E / \Q) \simeq \{ \pm 1\}, \ \mathrm{Gal}(L/E) \simeq \mathcal{Z}.
\]
We thus obtain a quadratic character $\epsilon : G_\Q \to \{ \pm 1 \}$ whose kernel cuts out $E$. 
\begin{lemma}\label{localinertia}
If $\ell > 5k-4$, then $\epsilon$ is unramified at $\ell$.
\end{lemma}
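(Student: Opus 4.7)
The plan is to combine Corollary \ref{cyclicinertia} with an elementary fact about dihedral groups. Let $\mathcal{I} := \rhofproj(I_\ell) \subseteq \mathcal{G}$ denote the image of an inertia group at $\ell$ under the projective representation. The character $\epsilon$ is unramified at $\ell$ if and only if $\mathcal{I}$ lies in the kernel of the projection $\mathcal{G} \to \{\pm 1\}$, i.e.\ if and only if $\mathcal{I} \subseteq \mathcal{Z}$. So the goal reduces to showing $\mathcal{I} \subseteq \mathcal{Z}$.

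Before invoking Corollary \ref{cyclicinertia}, I would note that because $\mathcal{G}$ is (genuinely) dihedral, $\rhof$ must be irreducible: a semisimple reducible two-dimensional representation has diagonalizable image, whose image in $\PGL_2$ is cyclic, never non-abelian dihedral. With irreducibility in hand, Corollary \ref{cyclicinertia} applies (the hypothesis $\ell > 5k - 4$ in particular forces $\ell > k$), and it tells us that $\mathcal{I}$ is cyclic of order $n > 5$, in particular $n > 2$.

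Now I would use the standard structure of a dihedral group: writing $\mathcal{G} = \langle r, s \mid r^m = s^2 = 1,\ srs^{-1} = r^{-1}\rangle$ with $\mathcal{Z} = \langle r \rangle$, every element of $\mathcal{G} \setminus \mathcal{Z}$ has the form $r^i s$ and satisfies $(r^i s)^2 = r^i (s r^i s) = r^i r^{-i} = 1$, hence has order $2$. So any cyclic subgroup of $\mathcal{G}$ of order strictly greater than $2$ must be contained in $\mathcal{Z}$. Applying this with $\mathcal{I}$ yields $\mathcal{I} \subseteq \mathcal{Z}$, and hence $\epsilon|_{I_\ell}$ is trivial.

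I do not anticipate a genuine obstacle here: the substantive input is Corollary \ref{cyclicinertia}, and the group-theoretic step is elementary. The role of the bound $\ell > 5k-4$ is solely to guarantee $n > 2$ via the corollary; the stronger conclusion $n > 5$ there is not needed for this lemma. The only point requiring a sentence of care is the implicit irreducibility of $\rhof$, which as explained follows automatically from the dihedral hypothesis on $\mathcal{G}$.
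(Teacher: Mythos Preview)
Your argument is correct and is essentially the same as the paper's: invoke Corollary \ref{cyclicinertia} to get $\mathcal{I}$ cyclic of order $>5$, then use that elements of a dihedral group outside the rotation subgroup have order $2$ to force $\mathcal{I}\subseteq\mathcal{Z}$. Your added remarks---that only $n>2$ is needed and that irreducibility of $\rhof$ (implicit in the hypotheses of Theorem \ref{localform}) follows from the dihedral assumption---are valid refinements the paper leaves tacit.
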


\begin{proof}
By Corollary \ref{cyclicinertia}, $\mathcal{I}$ is cyclic of order $> 5$. Since $\mathcal{I} \subset \mathcal{G}$, we must have $\mathcal{I} \subset \mathcal{Z}$. Thus $I_\ell$ is contained in the kernel of $\epsilon$. 
\end{proof}

\section{Main result}\label{section-main}

In order to prove our main theorem, we must first prove a version of \cite[Proposition 6.1]{DW} for eigenforms of general weight $k \geq 2$, after which the proof of our theorem will follow easily.

\begin{proposition}\label{mainprop}
Let $p,q,t,u$ be distinct odd primes and let $N$ be an integer which is divisible by every odd prime $p \leq 5k-4$. Let $p_1,\ldots, p_m$ be the prime divisors of $2N$. Let $f \in S_k(Nq^2u^2, \chi)$ be an eigenform without CM which is tamely dihedral of order $p^r > 5$ at $q$ and tamely dihedral of order $t^s > 5$ at $u$. Assume that $q$ and $u$ are completely split in $\Q(i,\sqrt{p_1},\ldots,\sqrt{p_m})$ and that $( \frac{q}{u} )= ( \frac{u}{q} )=1$.

Then $f$ does not have any exceptional primes, i.e. for all maximal ideals $\lambda$ of $\OO_f$, the image of $\rhofproj$ is $\PSL(\F_\lambda)$ or $\PGL(\F_\lambda)$. 
\end{proposition}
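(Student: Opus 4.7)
The plan is to adapt the proof of \cite[Proposition 6.1]{DW} from weight $2$ to arbitrary even weight $k \geq 2$, using Lemma \ref{localinertia} as the essential new ingredient in place of the weight-$2$ specific bounds. Fix a maximal ideal $\lambda$ of $\OO_f$ of residue characteristic $\ell$; by Dickson's classification, if $\rhofproj$ is exceptional then either $\rhof$ is reducible, or the projective image is dihedral, $A_4$, $S_4$, or $A_5$. I would rule out each of these possibilities in turn.

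For the reducible case, I would argue that the tamely dihedral structure at $q$ gives $\rho_{f,\lambda}|_{G_q} \simeq \mathrm{Ind}_{W_{q^2}}^{W_q} \psi$ with $\psi$ of niveau $2$ and order $p^r > 5$. Since $p^r$ is coprime to $\ell$ whenever $\ell \neq p$, the mod $\lambda$ reduction $\bar\psi$ retains niveau $2$ and the induction remains irreducible, making $\rhof$ irreducible; the symmetric argument at $u$ (using $t^s > 5$ and $t \neq p$ by the distinctness hypothesis) handles the case $\ell = p$.

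For the dihedral case, I would let $\epsilon : G_\Q \to \{\pm 1\}$ denote the quadratic character cutting out the subfield $E$ fixed by $\mathcal{Z}$, and pin down the allowable ramification of $\epsilon$ at each rational prime. When $\ell > 5k - 4$, Lemma \ref{localinertia} immediately forces $\epsilon$ to be unramified at $\ell$; when $\ell \leq 5k - 4$, the hypothesis that $N$ is divisible by every odd prime in this range places $\ell$ among the $p_i$, and the splitting of $q, u$ in $\Q(i, \sqrt{p_1}, \ldots, \sqrt{p_m})$ tightly limits the contribution of $\ell$ to the discriminant of $E$. At $q$ (and symmetrically at $u$), the cyclic image $\rhofproj(I_q)$ has order $p^r > 5$ for $\ell \neq q$, so it cannot map nontrivially to $\{\pm 1\}$ and therefore lies inside $\mathcal{Z}$, forcing $\epsilon$ to be unramified at $q$. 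Combining these ramification restrictions with the hypotheses $(\frac{q}{u}) = (\frac{u}{q}) = 1$, I would verify that no nontrivial quadratic field $E$ is consistent with all constraints, yielding the desired contradiction. This is precisely where the non-CM hypothesis is invoked to exclude the ``trivial'' quadratic field associated to a potential CM form.

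For images of type $A_4$, $S_4$, or $A_5$: when $\ell > 5k - 4$, Corollary \ref{cyclicinertia} produces a cyclic subgroup of order $> 5$ inside $\rhofproj(G_\Q)$ coming from inertia at $\ell$, which is impossible in any of these three groups. For the remaining small primes $\ell$, I would use the cyclic image of $\rhofproj(I_q)$ of order $p^r > 5$ (or of $\rhofproj(I_u)$ of order $t^s > 5$, if $\ell = q$), which provides the same contradiction. The hard part will be the combinatorial book-keeping in the dihedral step, where one must check that the ramification and splitting data really do eliminate every possible quadratic field $E$; this mirrors the analysis carried out for $k=2$ in \cite{DW} and should go through verbatim once Lemma \ref{localinertia} is substituted for its weight-$2$ counterpart.
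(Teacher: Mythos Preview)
Your overall strategy matches the paper's, and your treatment of the reducible case and of the $A_4$, $S_4$, $A_5$ cases is fine (indeed, slightly more careful than the paper in explicitly invoking the tamely dihedral inertia at $q$ or $u$ for small $\ell$). However, there is a real gap in your dihedral argument.

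You show only that $\epsilon$ is \emph{unramified} at $q$ (because $\rhofproj(I_q)$ is cyclic of odd order $p^r$ and hence lands in $\mathcal{Z}$). That is true but too weak: once you combine it with your other ramification constraints you get at best $E \subset \Q(i,\sqrt{u},\sqrt{p_1},\ldots,\sqrt{p_m})$, and the fact that $q$ splits completely in this field tells you nothing further---plenty of nontrivial quadratic subfields survive. The Legendre symbol hypotheses alone do not close this off. What the paper actually uses is the stronger fact that $q$ is \emph{inert} in $E$: since $\rhofproj|_{G_q} \simeq \mathrm{Ind}_{\Q_{q^2}}^{\Q_q}(\psi)$ with $\psi$ of niveau $2$, and simultaneously $\rhofproj|_{G_q} \simeq \mathrm{Ind}_{E_{\mathfrak q}}^{\Q_q}(\alpha)$, the local quadratic extension $E_{\mathfrak q}/\Q_q$ must coincide with the unramified one $\Q_{q^2}/\Q_q$. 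Inertness of $q$ in $E$ is then genuinely incompatible with $q$ splitting completely in $\Q(i,\sqrt{u},\sqrt{p_1},\ldots,\sqrt{p_m})$, and this is what forces first $\ell \notin \{u,p_1,\ldots,p_m\}$ (hence $\ell > 5k-4$, so Lemma~\ref{localinertia} applies), and then the contradiction $E = \Q$. Your plan needs this inertness step; ``unramified at $q$'' is not enough. Also, your remark that the non-CM hypothesis is invoked here to exclude a CM quadratic field is not how the argument runs---the contradiction is purely the inert-versus-split clash, and the paper's proof does not appeal to non-CM at that point.
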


\begin{proof}
The proof is similar to the proof of \cite[Proposition 6.1]{DW}, which we follow closely. Let $\lambda$ be any maximal ideal of $\OO_f$ and suppose it lies over the rational prime $\ell$. By our ``tamely dihedral'' hypotheses, $\rhof$ is irreducible, since if $\ell \notin \{p,q\}$, then already $\rhof \mid_{G_q}$ is irreducible, and if $\ell \in \{p, q\}$, then $\ell \notin \{t,u \}$, hence $\rhof \mid_{G_u}$ is irreducible. 

Now suppose the image of $\rhofproj$ is a dihedral group. This means that $\rhofproj$ is the induction of a character of a quadratic extension $E / \Q$, i.e.
\[
\rhofproj \simeq \mathrm{Ind}_E^\Q(\alpha)
\]
for some character $\alpha$ of $\mathrm{Gal}(\bar{\Q} / E)$. By the ramification properties of $\rhofproj$, we know 
\begin{equation}\label{Eeq}
E \subset \Q(i, \sqrt{\ell}, \sqrt{q}, \sqrt{u}, \sqrt{p_1}, \ldots, \sqrt{p_m}).
\end{equation}

First assume that $\ell \notin \{p,q\}$. In this case, we have
\[
\rhofproj \mid_{D_q} \simeq \mathrm{Ind}_{\Q_{q^2}}^{\Q_q} (\psi) \simeq \mathrm{Ind}_{E_\mathfrak{q}}^{\Q_q} (\alpha)
\]
where $\mathfrak{q}$ is a prime in $\OO_E$ lying over $q$ and where $\psi$ is a niveau $2$ character of order $p^r$. This implies that $q$ is inert in $E$, but by assumption $q$ is totally split in $\Q(i,\sqrt{u},\sqrt{p_1},\ldots,\sqrt{p_m})$, so from (\ref{Eeq}) we deduce that 
\[
\ell \notin \left\{ u, p_1, \ldots, p_m \right\}.
\]
 In particular, we see that $\ell \nmid 2Nu$, so by our choice of $N$, we conclude that $\ell > 5k-4$. Thus by Lemma \ref{localinertia} our quadratic field $E$ cannot ramify at $\ell$, so we can refine (\ref{Eeq}) to
 \[
 E \subset \Q(i, \sqrt{q}, \sqrt{u}, \sqrt{p_1}, \ldots, \sqrt{p_m}),
\]
with $E$ totally split in the latter. But now the fact that $q$ is inert in $E$ implies that $E=\Q$ rather than a quadratic extension, and this contradiction implies that $\ell \in \{p , q \}$ and in particular $\ell \notin \{ t, u \}$. Upon exchanging the roles $q \leftrightarrow u$, $p \leftrightarrow t$, and $r \leftrightarrow s$, running this argument again leads to a contradiction, hence the image of $\rhofproj$ is not dihedral.

If $\lambda$ is exceptional and the image of $\rhofproj$ is not dihedral, then by Dickson's classification, the only other possibilities for the image are $A_4$, $S_4$, and $A_5$. But the image of $\rhofproj$ contains an element of order $>5$ by Corollary \ref{cyclicinertia}, so none of these are possible. 
\end{proof}

We may now prove our main theorem. The proof is essentially the same as the proof of \cite[Theorem 6.2]{DW}.

%\begin{theorem}
%Let $k \geq 2$ be an integer. There exist eigenforms $(f_n)_{n \in \mathbf{N}}$ of weight $k$ with trivial nebentype such that
%\begin{enumerate}
%\item for all $n$ the eigenform $f_n$ has no exceptional primes, and
%\item for a fixed prime $\ell$, the size of the image of $\bar{\rho}_{f_n,\lambda_n}$ for $\lambda_n \vartriangleleft \OO_{f_n}$ is unbounded for running $n$.
%\end{enumerate}
%\end{theorem}

\begin{proof}[Proof of Theorem \ref{main-thm-intro}]
Start with some newform $f_1 \in S_k(\Gamma_0(N))$ for $N$ of squarefree level. Note that modular forms of level $\Gamma_0(N)$ never have CM when $N$ is squarefree. Let $p_1, \ldots, p_m$ be the prime divisors of $6N$. 

Let $B_1 > 0$ be any bound. Take $p$ to be any prime bigger than $B$ provided by Proposition \ref{levelraise} applied to $f$ and the set $\{ p_1, \ldots, p_m \}$. We thus obtain an eigenform $g \in S_k(\Gamma_0(Nq^2))$ which is tamely dihedral at $q$ of order $p$ for some prime $q$. Now apply Proposition \ref{levelraise} to the form $g$ and the set $\{ q, p_1, \ldots, p_m \}$ to obtain a prime $t > B$ different from $p$ and an eigenform $h \in S_k(\Gamma_0(Nq^2 u^2))$ which is tamely dihedral at $u$ of order $t$ for some prime $u$. By Proposition \ref{mainprop}, $h$ does not have any exceptional primes. 

Thus we take $f_2 = h$ and take a new bound $B_2 > B_1$. Inductively we obtain a family $(f_n)_{n \in \mathbf{N}}$ and the image of inertia grows without bound in this family.
\end{proof}

\section*{Acknowledgments} It is a pleasure to thank Keenan Kidwell for his careful reading of an earlier version of this paper. We are very grateful to the anonymous referees for offering helpful comments and corrections in an extremely timely fashion.

\vspace{5ex}

\end{document}